\newtheorem{theorem}{Theorem}[section]
\newtheorem{proposition}[theorem]{Proposition}
\newtheorem{remark}[theorem]{Remark}
\newtheorem*{theo}{Theorem}
\newtheorem*{gkszi}{Generalized Kahane--Salem--Zygmund inequality}
\numberwithin{equation}{section}
\begin{document}
\title[Optimal Hardy--Littlewood type inequalities ]{Optimal
Hardy--Littlewood type inequalities for $m$-linear forms on $\ell _{p}$
spaces with $1\leq p\leq m$}
\author[G. Ara\'{u}jo]{Gustavo Ara\'{u}jo}
\address{Departamento de An\'{a}lisis Matem\'{a}tico \\
\indent Facultad de Ciencias Matem\'{a}ticas \\
\indent Universidad Complutense de Madrid \\
\indent Plaza de Ciencias 3 \\
\indent Madrid, 28040, Spain.}
\email{gdasaraujo@gmail.com}
\author[D. Pellegrino]{Daniel Pellegrino}
\address{Departamento de Matem\'{a}tica \\
\indent Universidade Federal da Para\'{\i}ba \\
\indent 58.051-900 - Jo\~{a}o Pessoa, Brazil.}
\email{pellegrino@pq.cnpq.br and dmpellegrino@gmail.com}
\thanks{G. Ara\'{u}jo is supported by PDSE/CAPES 8015/14-7. D. Pellegrino is
supported by CNPq Grant 401735/2013-3 - PVE - Linha 2 and INCT-Matem\'{a}%
tica.}
\subjclass[2010]{32A22, 47H60.}
\keywords{Bohnenblust--Hille inequality, Hardy--Littlewood inequality,
Absolutely summing operators.}
\maketitle

\begin{abstract}
The Hardy--Littlewood inequalities for $m$-linear forms on $\ell _{p}$
spaces are stated for $p>m$. In this paper, among other results, we
investigate similar results for $1\leq p\leq m.$ Let $\mathbb{K}$ be $%
\mathbb{R}$ or $\mathbb{C}$ and $m\geq 2$ be a positive integer. Our main
results are the following sharp inequalities:

\begin{itemize}
\item[(i)] If $\left( r,p\right) \in \left( \lbrack 1,2]\times \lbrack
2,2m)\right) \cup \left( \lbrack 1,\infty )\times \lbrack 2m,\infty \right]
) $, then there is a positive constant $D_{m,r,p}^{\mathbb{K}}$ (not
depending on $n$) such that 
\begin{equation*}
\textstyle\left( \sum\limits_{j_{1},...,j_{m}=1}^{n}\left\vert
T(e_{j_{1}},...,e_{j_{m}})\right\vert ^{r}\right) ^{\frac{1}{r}}\leq
D_{m,r,p}^{\mathbb{K}}n^{\max \left\{ \frac{2mr+2mp-mpr-pr}{2pr},0\right\}
}\left\Vert T\right\Vert
\end{equation*}%
for all $m$--linear forms $T:\ell _{p}^{n}\times \cdots \times \ell
_{p}^{n}\rightarrow \mathbb{K}$ and all positive integers $n$.

\item[(ii)] If $\left( r,p\right) \in \lbrack 2,\infty )\times (m,2m]$, then 
\begin{equation*}
\textstyle\left( \sum\limits_{j_{1},...,j_{m}=1}^{n}\left\vert
T(e_{j_{1}},...,e_{j_{m}})\right\vert ^{r}\right) ^{\frac{1}{r}}\leq \left( 
\sqrt{2}\right) ^{m-1}n^{\max \left\{ \frac{p+mr-rp}{pr},0\right\}
}\left\Vert T\right\Vert
\end{equation*}%
for all $m$--linear forms $T:\ell _{p}^{n}\times \cdots \times \ell
_{p}^{n}\rightarrow \mathbb{K}$ and all positive integers $n.$
\end{itemize}

Moreover the exponents $\max \{(2mr+2mp-mpr-pr)/2pr,0\}$ in (i) and $\max
\{(p+mr-rp)/pr,0\}$ in (ii) are optimal. The cases $\left( r,p\right)
=\left( 2m/\left( m+1\right) ,\infty \right) $ and $\left( r,p\right)
=\left( 2mp/\left( mp+p-2m\right) ,p\right) $ for $p\geq 2m$ and $\left(
r,p\right) =\left( p/\left( p-m\right) ,p\right) $ for $m<p<2m$ recover the
classical Bohnenblust--Hille and Hardy--Littlewood inequalities.
\end{abstract}

\section{Introduction}

The recent years witnessed an intense interest in the Bohnenblust--Hille
inequality and its applications in Complex Analysis, Analytic Number Theory
and Quantum Information Theory. The Bohnenblust--Hille inequality was proved
in 1931, in the \textit{Annals of Mathematics}, as a crucial tool to prove
the Bohr's absolute convergence problem on Dirichlet series. Surprisingly,
this inequality was overlooked for almost 80 years and rediscovered some
years ago. Since then, it has been used in different areas of Mathematics
and several challenging problems remain open. From now on $\mathbb{K}$
denotes the real scalar field $\mathbb{R}$ or the complex scalar field $%
\mathbb{C}$.

\begin{theo}[Bohnenblust and Hille \protect\cite{bh}, 1931]
For any positive integer $m\geq 2$, there exists a constant $B_{\mathbb{K}%
,m}\geq 1$ such that 
\begin{equation}
\textstyle\left( \sum\limits_{j_{1},...,j_{m}=1}^{n}\left\vert
T(e_{j_{1}},...,e_{j_{m}})\right\vert ^{\frac{2m}{m+1}}\right) ^{\frac{m+1}{%
2m}}\leq B_{\mathbb{K},m}\left\Vert T\right\Vert  \label{tttt}
\end{equation}%
for all $m$--linear forms $T:\ell _{\infty }^{n}\times \cdots \times \ell
_{\infty }^{n}\rightarrow \mathbb{K}$, and all positive integers $n$.
Moreover, the exponent $2m/(m+1)$ is optimal.
\end{theo}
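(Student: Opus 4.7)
The plan is to prove the upper bound via a Khinchin-plus-interpolation scheme and to settle optimality of the exponent $2m/(m+1)$ via the Kahane--Salem--Zygmund random construction.

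For the upper bound, my starting point is an application of Khinchin's inequality in a single variable of $T$. Fixing $j_{2},\ldots,j_{m}$ and integrating against Rademacher functions in the first index, one obtains, after a short manipulation using multilinearity and the fact that $\|\sum_{j_{1}}r_{j_{1}}(t)e_{j_{1}}\|_{\infty}\le 1$, the mixed estimate
$$\sum_{j_{1}=1}^{n}\Bigl(\sum_{j_{2},\ldots,j_{m}=1}^{n}|T(e_{j_{1}},\ldots,e_{j_{m}})|^{2}\Bigr)^{1/2}\le C_{m}\|T\|.$$
By the symmetry of the statement in the $m$ variables, the analogous bound holds with the $\ell_{1}$-norm placed in any of the $m$ positions. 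The critical step is then to upgrade these $m$ asymmetric $(1,2,\ldots,2)$-summability statements into the single symmetric $\ell_{2m/(m+1)}$-summability; this is achieved through a generalized Minkowski/H\"older inequality of Blei type, and the exponent $2m/(m+1)$ arises precisely as the harmonic mean of the $m$ ones and $m(m-1)$ twos appearing across the $m$ mixed norms.

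The optimality of $2m/(m+1)$ is proved by contradiction through the Kahane--Salem--Zygmund inequality, which for each $n$ produces an $m$-linear form $T_{n}:\ell_{\infty}^{n}\times\cdots\times\ell_{\infty}^{n}\to\mathbb{K}$ with $|T_{n}(e_{j_{1}},\ldots,e_{j_{m}})|=1$ for all choices of indices while $\|T_{n}\|\le C\,n^{(m+1)/2}$ (up to a logarithmic factor when $\mathbb{K}=\mathbb{R}$). If \eqref{tttt} held with some exponent $r<2m/(m+1)$, the left-hand side evaluated at $T_{n}$ would be $n^{m/r}$, forcing $n^{m/r}\le B_{\mathbb{K},m}\,C\,n^{(m+1)/2}$, which is impossible for large $n$.

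The main obstacle is the Blei-type interpolation: extracting from $m$ mixed $(1,2,\ldots,2)$-estimates the single diagonal $\ell_{2m/(m+1)}$-estimate without losing the sharp exponent. This combinatorial-analytic merging step is the heart of the Bohnenblust--Hille theorem, since a naive application of H\"older to the mixed estimates yields only a strictly larger exponent, and it is precisely here that $2m/(m+1)$ is forced as the optimal value.
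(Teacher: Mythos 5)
This theorem is quoted in the paper as a classical result with a citation to Bohnenblust--Hille; the paper contains no proof of it, so there is nothing internal to compare your argument against. On its own terms, your outline is the standard modern proof and its architecture is sound: the mixed $(1,2,\dots,2)$ estimates, the Blei/Minkowski--H\"older merging that produces the diagonal exponent $2m/(m+1)$ as the harmonic mean you describe, and the Kahane--Salem--Zygmund construction for optimality (which, with $\alpha(\infty)=\tfrac12$ and $\Vert A\Vert\leq C_m n^{(m+1)/2}$, is exactly the device the paper itself uses to prove optimality in its Theorem 2.1). The optimality computation $n^{m/r}\leq C n^{(m+1)/2}$ forcing $r\geq 2m/(m+1)$ is correct.

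One step is misdescribed, though the estimate you display is the right one. To obtain
$\sum_{j_{1}}\bigl(\sum_{j_{2},\dots,j_{m}}|T(e_{j_{1}},\dots,e_{j_{m}})|^{2}\bigr)^{1/2}\leq C_{m}\Vert T\Vert$
you must apply the (multilinear) Khinchin inequality in the \emph{inner} indices $j_{2},\dots,j_{m}$ --- that is where the $\ell_{2}$ norms come from --- and use the sign-choice trick $\sum_{j_{1}}|T(e_{j_{1}},y_{2},\dots,y_{m})|=|T(\sum_{j_{1}}\varepsilon_{j_{1}}e_{j_{1}},y_{2},\dots,y_{m})|\leq\Vert T\Vert$ in the outer index $j_{1}$. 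Integrating Rademachers in the first index, as you write, would by Khinchin produce an $\ell_{2}$ norm in $j_{1}$ (and, done for each fixed $j_{2},\dots,j_{m}$ separately, would cost a factor $n^{m-1}$ upon summation), not the $\ell_{1}(\ell_{2})$ mixed norm you need. You also leave the Blei-type merging as a named black box rather than proving it; since you correctly identify it as the crux and it is a standard lemma, that is acceptable in a sketch, but it is the one place where the sharp exponent is actually earned. Finally, the version of Kahane--Salem--Zygmund stated in this paper carries no logarithmic loss in the real case, so your parenthetical caveat is unnecessary (and harmless).
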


For references we mention, for instance, \cite{alb, bohr, deff2, deff, Nuuu}
and the very interesting survey \cite{sur}. The optimal values of $B_{%
\mathbb{K},m}$ are unknown; the best known upper and lower estimates for the
constants in (\ref{tttt}) are (see \cite{bohr} and \cite{diniz}): 
\begin{equation*}
\begin{array}{rcccll}
&  & B_{\mathbb{C},m} & \leq & \textstyle\prod\limits_{j=2}^{m}\Gamma \left(
2-\frac{1}{j}\right) ^{\frac{j}{2-2j}}<m^{\frac{1-\gamma }{2}}<m^{0.22}, & 
\vspace{0.2cm} \\ 
2^{1-\frac{1}{m}} & \leq & B_{\mathbb{R},m} & \leq & 2^{\frac{446381}{55440}%
- \frac{m}{2}}\textstyle\prod\limits_{j=14}^{m}\left( \frac{\Gamma \left( 
\frac{3}{2}-\frac{1}{j}\right) }{\sqrt{\pi }}\right) ^{\frac{j}{2-2j}%
}<1.3\cdot m^{\frac{2-\log 2-\gamma }{2}} & \text{if }m\geq 14,\vspace{0.2cm}
\\ 
2^{1-\frac{1}{m}} & \leq & B_{\mathbb{R},m} & \leq & \textstyle%
\prod\limits_{j=2}^{m}2^{\frac{1}{2j-2}}<1.3\cdot m^{\frac{2-\log 2-\gamma }{%
2}}<1.3\cdot m^{0.37} & \text{if }m\leq 13,%
\end{array}%
\end{equation*}
where $\gamma $ denotes the Euler--Mascheroni constant.

\bigskip A natural question is: what happens if we replace $\ell _{\infty
}^{n}$ by $\ell _{p}^{n}$ in the Bohnenblust--Hille inequality? This
question was answered by Hardy and Littlewood (see \cite{hardy}) in 1934 for
bilinear forms, and complemented by Praciano-Pereira (see \cite{pra}) in
1981 for $m$-linear forms and $p\geq 2m$ (and later by Dimant and
Sevilla-Peris for $m<p<2m$ (see \cite{dimant})).

\begin{theo}[Hardy--Littlewood/Praciano-Pereira \protect\cite{hardy,pra},
1934/1981]
Let $m\geq 2$ be a positive integer and $p\geq 2m.$ For all $m$--linear
forms $T:\ell _{p}^{n}\times \cdots \times \ell_{p}^{n}\rightarrow \mathbb{K}
$ and all positive integers $n$, 
\begin{equation}
\textstyle\left( \sum\limits_{j_{1},...,j_{m}=1}^{n}\left\vert
T(e_{j_{1}},...,e_{j_{m}})\right\vert ^{\frac{2mp}{mp+p-2m}}\right) ^{\frac{%
mp+p-2m}{2mp}}\leq \left( \sqrt{2}\right) ^{m-1}\left\Vert T\right\Vert.
\label{yd}
\end{equation}
Moreover, the exponent $2mp/(mp+p-2m)$ is optimal.
\end{theo}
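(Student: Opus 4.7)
The plan is to derive the upper bound by combining the Bohnenblust--Hille inequality (at the $p=\infty$ endpoint) with a mixed-norm/iterated H\"older argument across the $m$ tensor slots, and to establish optimality of the exponent $q:=2mp/(mp+p-2m)$ via the generalised Kahane--Salem--Zygmund inequality.

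The central algebraic identity behind the upper bound is
\[
\frac{1}{q}=\frac{m+1}{2m}-\frac{1}{p},
\]
which exhibits $1/q$ as the Bohnenblust--Hille reciprocal $(m+1)/(2m)$ shifted by $-1/p$. Two basic coefficient estimates feed the argument. First, applied to $T$ regarded as an $m$--linear form on $\ell_{\infty}^{n}$, the Bohnenblust--Hille theorem gives
\[
\left(\sum_{j_{1},\dots,j_{m}=1}^{n}|T(e_{j_{1}},\dots,e_{j_{m}})|^{2m/(m+1)}\right)^{(m+1)/(2m)}\le (\sqrt{2})^{m-1}\|T\|_{\ell_{\infty}^{n}\times\cdots\times\ell_{\infty}^{n}},
\]
with Littlewood's constant $(\sqrt{2})^{m-1}$; crucially, the evaluation at canonical basis vectors is the same whether $T$ acts on $\ell_{p}^{n}$ or on $\ell_{\infty}^{n}$, since $\|e_{j}\|_{\ell_{p}^{n}}=\|e_{j}\|_{\ell_{\infty}^{n}}=1$. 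Second, freezing all but one slot at canonical basis vectors turns $T$ into a linear functional on $\ell_{p}^{n}$ of norm at most $\|T\|$, yielding the coordinate-wise duality bound $\bigl(\sum_{j_{i}=1}^{n}|T(e_{j_{1}},\dots,e_{j_{m}})|^{p^{\ast}}\bigr)^{1/p^{\ast}}\le \|T\|$ in every slot $i\in\{1,\dots,m\}$. I would then combine these two ingredients via a Blei-type mixed-norm/iterated H\"older step (in the spirit of the proofs of Hardy--Littlewood for $m=2$ and Praciano-Pereira for general $m$), with H\"older weights dictated by the displayed identity so that the partial exponents collapse into the uniform exponent $q$ and the constant stays at $(\sqrt{2})^{m-1}$.

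For the optimality of $q$, I would invoke the generalised Kahane--Salem--Zygmund inequality to produce, for every $n$, an $m$--linear form $T_{n}:\ell_{p}^{n}\times\cdots\times\ell_{p}^{n}\to\mathbb{K}$ with unimodular coefficients and $\|T_{n}\|\le C_{m}\,n^{(m+1)/2-m/p}$. If the inequality held with some exponent $r<q$, the substitution $T=T_{n}$ would force
\[
n^{m/r}\le C'\,n^{(m+1)/2-m/p}=C'\,n^{m/q}\qquad\text{for every }n\ge 1,
\]
whence $r\ge q$ upon letting $n\to\infty$, a contradiction.

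The most delicate step I anticipate is the mixed-norm/Blei-type interpolation that merges the $m$ coordinate-wise $\ell_{p^{\ast}}$ estimates with the single Bohnenblust--Hille $\ell_{2m/(m+1)}$ estimate into a clean uniform $\ell_{q}$ bound while preserving the constant $(\sqrt{2})^{m-1}$ without any extraneous factor depending on $m$ or $p$; once the H\"older exponents have been balanced so that the partial exponents collapse via the identity $1/q=(m+1)/(2m)-1/p$, the Kahane--Salem--Zygmund half of the argument is essentially a direct probabilistic computation.
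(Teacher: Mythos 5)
First, note that the paper does not prove this statement at all: it is quoted as a classical theorem of Hardy--Littlewood and Praciano-Pereira (with the constant $(\sqrt{2})^{m-1}$ taken from \cite{ap}), and is then used as a black box in the proof of Theorem \ref{888}. Your optimality argument is fine and is exactly the Kahane--Salem--Zygmund computation the paper itself runs for Theorem \ref{888}(a): with $\Vert T_{n}\Vert\leq C_{m}n^{\frac{m+1}{2}-\frac{m}{p}}=C_{m}n^{m/q}$ and $\pm 1$ coefficients, an inequality with exponent $r$ forces $n^{m/r}\leq C'n^{m/q}$ and hence $r\geq q$.

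The upper-bound half, however, has a genuine gap. Your first ingredient bounds the coefficient sum by $(\sqrt{2})^{m-1}\Vert T\Vert_{\ell_{\infty}^{n}\times\cdots\times\ell_{\infty}^{n}}$, and you argue that the $\ell_{p}$ and $\ell_{\infty}$ settings are interchangeable because the basis vectors have norm one in both. But the obstruction sits on the \emph{right}-hand side, not the left: since $B_{\ell_{p}^{n}}\subset B_{\ell_{\infty}^{n}}$, one has $\Vert T\Vert_{\ell_{p}^{n}\times\cdots\times\ell_{p}^{n}}\leq\Vert T\Vert_{\ell_{\infty}^{n}\times\cdots\times\ell_{\infty}^{n}}$, i.e.\ the inequality goes the wrong way, and the discrepancy is as large as $n^{m/p}$ (for $R(x^{(1)},\dots,x^{(m)})=\sum_{j}x_{j}^{(1)}\cdots x_{j}^{(m)}$ one has $\Vert R\Vert_{\ell_{\infty}^{n}}=n$ while $\Vert R\Vert_{\ell_{p}^{n}}\leq n^{1-m/p}$). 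This is precisely why the paper's inequality (\ref{333}) carries the factor $n^{m/p}$; a Bohnenblust--Hille estimate in terms of $\Vert T\Vert_{\ell_{p}^{n}\times\cdots\times\ell_{p}^{n}}$ with no power of $n$ is false. Consequently the proposed single H\"older/Blei step between this (unusable) $\ell_{2m/(m+1)}$ bound and the coordinatewise $\ell_{p^{\ast}}$ bounds cannot close: the exponent bookkeeping via $1/q=(m+1)/(2m)-1/p$ is correct arithmetic, but the norms it would have to interpolate are not both controlled by $\Vert T\Vert_{\ell_{p}^{n}\times\cdots\times\ell_{p}^{n}}$. The actual proof requires the Khinchine-based mixed-norm (Blei) estimates carried out on $\ell_{p}$ itself --- exponent $2$ in $m-1$ blocks and $p^{\ast}$ in one, followed by the multiplicative interpolation of mixed norms, as in \cite{hardy, pra} or, in the multiple-summing language, the per-coordinate interpolation of \cite{n, ap} --- and it is also only through that route that the constant $(\sqrt{2})^{m-1}$ is obtained.
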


\begin{theo}[Hardy--Littlewood/Dimant--Sevilla-Peris \protect\cite{hardy,
dimant}, 1934/2014]
Let $m\geq 2$ be a positive integer and $m<p<2m$. For all $m$--linear forms $%
T:\ell_{p}^{n}\times \cdots \times \ell _{p}^{n}\rightarrow \mathbb{K}$ and
all positive integers $n$, 
\begin{equation*}
\textstyle\left( \sum\limits_{j_{1},...,j_{m}=1}^{n}\left\vert
T(e_{j_{1}},...,e_{j_{m}})\right\vert ^{\frac{p}{p-m}}\right) ^{\frac{p-m}{p}%
}\leq \left( \sqrt{2}\right) ^{m-1}\left\Vert T\right\Vert.
\end{equation*}
Moreover, the exponent $p/(p-m)$ is optimal.
\end{theo}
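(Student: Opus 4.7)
The idea is to interpolate between two endpoint estimates in $p$. At $p=2m$, the Praciano--Pereira theorem above specializes to $\bigl(\sum_{\vec\jmath}|T(e_{\vec\jmath})|^{2}\bigr)^{1/2}\leq(\sqrt{2})^{m-1}\|T\|$, since $p/(p-m)|_{p=2m}=2$ coincides with $2mp/(mp+p-2m)|_{p=2m}=2$. At $p=m$, the trivial estimate $|T(e_{\vec\jmath})|\leq\|T\|\prod_{k}\|e_{j_{k}}\|_{p}\leq\|T\|$ gives the ``$r=\infty$'' case with constant $1$. Setting $\theta=2m/p-1\in(0,1)$ for $m<p<2m$, the scales match: on the domain side $[\ell_{2m}^{n},\ell_{m}^{n}]_{\theta}=\ell_{p}^{n}$, while on the coefficient side $[\ell_{2}^{n^{m}},\ell_{\infty}^{n^{m}}]_{\theta}=\ell_{r}^{n^{m}}$ with $1/r=(1-\theta)/2=(p-m)/p$, i.e.\ $r=p/(p-m)$. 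A formal interpolation would yield the constant $((\sqrt{2})^{m-1})^{1-\theta}=2^{(m-1)(p-m)/p}\leq(\sqrt{2})^{m-1}$.

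\textbf{Execution.} Because Calderón's complex interpolation for multilinear forms bounds the multilinear norm \emph{from above} by the endpoint norms (rather than producing the coefficient-norm inequality we need), I would not invoke interpolation as a black box but carry out the argument by hand. Two ingredients feed it: (a) for each fixed $(j_{1},\ldots,\widehat{j_{k}},\ldots,j_{m})$, the map $x\mapsto T(e_{j_{1}},\ldots,x,\ldots,e_{j_{m}})$ is a linear functional on $\ell_{p}^{n}$ of norm at most $\|T\|$, yielding the uniform per-slot estimate $\bigl(\sum_{j_{k}}|T(e_{\vec\jmath})|^{p/(p-1)}\bigr)^{(p-1)/p}\leq\|T\|$; and (b) iterating Khintchine's inequality in $m-1$ of the variables produces a mixed $\ell_{2}$-bound carrying the factor $(\sqrt{2})^{m-1}$. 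A Blei/Minkowski-type mixed-norm lemma then combines (a) and (b) into the global $\ell_{p/(p-m)}$-bound on the coefficient tensor.

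\textbf{Sharpness and main obstacle.} The optimality of $p/(p-m)$ is certified by the diagonal form $T_{n}(x^{(1)},\ldots,x^{(m)})=\sum_{j=1}^{n}x_{j}^{(1)}\cdots x_{j}^{(m)}$: Hölder with exponents $m,\ldots,m$ gives $\|T_{n}\|_{\ell_{m}^{n}\times\cdots\times\ell_{m}^{n}}\leq 1$, so via the inclusion $\ell_{p}^{n}\hookrightarrow\ell_{m}^{n}$ one has $\|T_{n}\|_{\ell_{p}^{n}\times\cdots\times\ell_{p}^{n}}\leq n^{1-m/p}$ for $p\geq m$; since $T_{n}(e_{\vec\jmath})=\mathbf{1}_{j_{1}=\cdots=j_{m}}$, $\bigl(\sum_{\vec\jmath}|T_{n}(e_{\vec\jmath})|^{s}\bigr)^{1/s}=n^{1/s}$, and boundedness of the ratio as $n\to\infty$ forces $s\geq p/(p-m)$. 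The crux of the whole argument is the combination step between (a) and (b): the Hölder exponents across the $m-1$ Khintchine applications must be distributed so that the final exponent lands exactly on $p/(p-m)$ with constant precisely $(\sqrt{2})^{m-1}$ and no residual $n$-dependence. A naive combination produces either a suboptimal exponent or an $n$-factor that cancels only at $p=2m$, which is why a Blei-type mixed-norm lemma is required rather than plain Hölder.
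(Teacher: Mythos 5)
First, a point of reference: the paper does not prove this theorem at all --- it is quoted as background and attributed to \cite{hardy,dimant}, so the only internal comparison available is with how the paper \emph{uses} the result, namely as the statement that every continuous $m$-linear form is multiple $\left(p/(p-m);p^{\ast}\right)$--summing with constant $(\sqrt{2})^{m-1}$ for $m<p\leq 2m$ (see part (b) of the proof of Theorem \ref{888}).

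Your optimality argument is correct and is the standard one; the paper itself reuses the same diagonal form (there called $R$) with $\Vert R\Vert\leq n^{1-m/p}$ in the optimality part of Theorem \ref{888}(b). The gap is in the positive direction. You rightly refuse to use interpolation as a black box, and you correctly name the per-slot $\ell_{p^{\ast}}$ estimate and iterated Khintchine (one factor $\sqrt{2}$ per variable) as ingredients, but the combination step --- which you yourself flag as the crux --- is left unexecuted, and the tool you point at would not close it. A Blei/Minkowski mixed-norm interpolation of the $m$ estimates having $\ell_{p^{\ast}}$ in one slot and $\ell_{2}$ in the remaining $m-1$ slots produces the exponent $\rho$ determined by $\frac{m}{\rho}=\frac{1}{p^{\ast}}+\frac{m-1}{2}$, which always satisfies $\rho\leq 2$; the target exponent $p/(p-m)$ lies in $(2,\infty)$ throughout the range $m<p<2m$, so no Blei-type averaging of those mixed norms can reach it. The actual argument of Dimant--Sevilla-Peris is structurally different: it is an induction on $m$ in which each of the $m-1$ steps applies Khintchine (cotype $2$ of the scalars) in a single variable together with the $(m-1)$-linear case, shifting the exponent by $1/p$ at each step according to the identity $\frac{p-m}{p}=\frac{1}{p^{\ast}}-\frac{m-1}{p}$ and contributing one factor $\sqrt{2}$; this yields the multiple $\left(p/(p-m);p^{\ast}\right)$--summing statement with constant $(\sqrt{2})^{m-1}$. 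As written, your proposal establishes sharpness but not the inequality itself.
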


\bigskip From now on, if $f$ is a function, we define $f(\infty
):=\lim_{p\rightarrow \infty }f(p)$ whenever it makes sense. In this fashion
note that the Hardy--Littlewood/Praciano-Pereira inequality encompasses the
Bohnenblust--Hille inequality.

To the best of our knowledge, the case $p\leq m$ was only explored for the
case of Hilbert spaces ($p=2,$ see \cite[Corollary 5.20]{compl} and \cite%
{cobos}) and the case $p=\infty$ was explored in \cite{camposar}. In \cite[Corollary 5.20]{compl} it is shown that for $p=2$ the
inequality has an extra power of $n$ in its right hand side. Other natural
questions are how the the Hardy--Littlewood/Praciano-Pereira and
Hardy--Littlewood/Dimant--Sevilla-Peris theorems behave if we replace the
optimal exponents $2mp/(mp+p-2m)$ and $p/(p-m)$ by a smaller value $r$. More
precisely, what power of $n$ will appear, depending on $r,m,p$? Our main
results answer this question (see Theorem \ref{888}) and extends \cite[%
Corollary 5.20]{compl} to $1\leq p\leq m$ (see Theorem \ref{888}(a) and
Proposition \ref{prop1}).

The main result of this note is the following:


\begin{theorem}
\label{888} Let $m\geq 2$ be a positive integer.

\begin{itemize}
\item[(a)] If $\left( r,p\right) \in \left( \lbrack 1,2]\times \lbrack
2,2m)\right) \cup \left( \lbrack 1,\infty )\times \lbrack 2m,\infty ]\right) 
$, then there is a constant $D_{m,r,p}^{\mathbb{K}}>0$ (not depending on $n$%
) such that 
\begin{equation*}
\textstyle\left( \sum\limits_{j_{1},...,j_{m}=1}^{n}\left\vert
T(e_{j_{1}},...,e_{j_{m}})\right\vert ^{r}\right) ^{\frac{1}{r}}\leq
D_{m,r,p}^{\mathbb{K}}n^{\max \left\{ \frac{2mr+2mp-mpr-pr}{2pr},0\right\}
}\left\Vert T\right\Vert
\end{equation*}%
for all $m$--linear forms $T:\ell _{p}^{n}\times \cdots \times \ell
_{p}^{n}\rightarrow \mathbb{K}$ and all positive integers $n$. Moreover, the
exponent $\max \left\{ (2mr+2mp-mpr-pr)/2pr,0\right\} $ is optimal.

\item[(b)] If $\left( r,p\right) \in \lbrack 2,\infty )\times (m,2m],$ then
there is a constant $D_{m,r,p}^{\mathbb{K}}>0$ (not depending on $n$) such
that 
\begin{equation*}
\textstyle\left( \sum\limits_{j_{1},...,j_{m}=1}^{n}\left\vert
T(e_{j_{1}},...,e_{j_{m}})\right\vert ^{r}\right) ^{\frac{1}{r}}\leq
D_{m,r,p}^{\mathbb{K}}n^{\max \left\{ \frac{p+mr-rp}{pr},0\right\}
}\left\Vert T\right\Vert
\end{equation*}%
for all $m$--linear forms $T:\ell _{p}^{n}\times \cdots \times \ell
_{p}^{n}\rightarrow \mathbb{K}$ and all positive integers $n.$ Moreover, the
exponent $\max \left\{ (p+mr-rp)/pr,0\right\} $ is optimal.
\end{itemize}
\end{theorem}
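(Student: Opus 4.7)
The plan is to prove each inequality by combining a known critical-exponent Hardy--Littlewood/Bohnenblust--Hille bound with three elementary tools: the inclusion estimate $\|T\|_{q}\le n^{m(1/p-1/q)}\|T\|_{p}$ for $p\le q$ (which follows from $\|x\|_{p}\le n^{1/p-1/q}\|x\|_{q}$ applied once in each of the $m$ variables); the H\"older inequality $\|a\|_{r}\le N^{1/r-1/s}\|a\|_{s}$ for $r\le s$ with $N=n^{m}$ summands (and plain monotonicity when $r\ge s$); and the log-convexity $\|a\|_{r}\le\|a\|_{r_{0}}^{\theta}\|a\|_{r_{1}}^{1-\theta}$ whenever $1/r=\theta/r_{0}+(1-\theta)/r_{1}$. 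Here $\|a\|_{r}$ denotes $(\sum|T(e_{j_{1}},\dots,e_{j_{m}})|^{r})^{1/r}$. Optimality will then be checked against two families of extremal $m$-linear forms.

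For the upper bound in (a) I would split into two subcases. When $p\in[2,2m]$ and $r\in[1,2]$, I start from the Hardy--Littlewood/Praciano-Pereira inequality at the endpoint $p_{0}=2m$, which reads $(\sum|T(e_{j_{1}},\dots,e_{j_{m}})|^{2})^{1/2}\le(\sqrt{2})^{m-1}\|T\|_{2m}$; push it from $\ell_{2m}^{n}$ down to $\ell_{p}^{n}$ via the inclusion estimate (gaining $n^{m/p-1/2}$) and then H\"older over the $n^{m}$ indices to descend from exponent $2$ to exponent $r$ (gaining $n^{m(1/r-1/2)}$); the three contributions add to exactly $\alpha=m/r+m/p-(m+1)/2$. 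When $p\in[2m,\infty]$, I apply Hardy--Littlewood directly at $p$ with its critical exponent $s_{p}=2mp/(mp+p-2m)$ and then H\"older from $s_{p}$ down to any $r\le s_{p}$ (again reproducing $\alpha$); for $r>s_{p}$ one has $\alpha<0$ and monotonicity $\|a\|_{r}\le\|a\|_{s_{p}}$ closes the argument. For the upper bound in (b), with $p\in(m,2m]$ and $r\in[2,p/(p-m)]$, the idea is to interpolate between the endpoints $r_{0}=2$ (for which the previous paragraph already gives $\|a\|_{2}\le(\sqrt{2})^{m-1}n^{m/p-1/2}\|T\|_{p}$) and $r_{1}=p/(p-m)$ (for which the Hardy--Littlewood/Dimant--Sevilla-Peris theorem gives $\|a\|_{r_{1}}\le(\sqrt{2})^{m-1}\|T\|_{p}$ with no power of $n$). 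Log-convexity with $\theta$ determined by $1/r=\theta/2+(1-\theta)(p-m)/p$ produces the bound $(\sqrt{2})^{m-1}n^{\theta(m/p-1/2)}\|T\|_{p}$, and a direct computation confirms $\theta(m/p-1/2)=1/r+m/p-1=\beta$; the regime $r>p/(p-m)$ is monotonicity plus Dimant--Sevilla-Peris.

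Optimality proceeds by substituting specific $m$-linear forms into the conjectured inequality and letting $n\to\infty$. For (a), the Kahane--Salem--Zygmund inequality supplies a random-sign $m$-linear form $T$ with $|T(e_{j_{1}},\dots,e_{j_{m}})|=1$ for every multi-index and $\|T\|_{p}\le C_{m}n^{(m+1)/2-m/p}\sqrt{\log n}$ for $p\ge 2$; substituting forces the candidate exponent $\alpha'$ to satisfy $\alpha'\ge\alpha$, and a single-coordinate form handles the trivial case $\alpha\le 0$. For (b), the diagonal form $T(x^{(1)},\dots,x^{(m)}):=\sum_{j=1}^{n}x^{(1)}_{j}\cdots x^{(m)}_{j}$ has $\|a\|_{r}=n^{1/r}$ and, for $p\ge m$, $\|T\|_{p}=n^{1-m/p}$ (upper bound by H\"older, equality realised by the constant vector $n^{-1/p}(1,\dots,1)$), forcing $\beta'\ge\beta$. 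The main technical obstacle I anticipate is the choice of endpoints for the interpolation in (b): a one-shot H\"older argument starting only from the Dimant--Sevilla-Peris critical exponent overshoots the true exponent by a factor of $m$, and only the interpolation between \emph{two distinct} Hardy--Littlewood endpoints (the $r=2$ bound from (a) and the DSP critical exponent) via log-convexity recovers the sharp value $\beta$.
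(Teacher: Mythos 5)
Your proposal is correct, and it reaches both sharp exponents by a route that is genuinely different from the paper's for the upper bounds. For (a) with $2\le p<2m$ the paper works inside the theory of multiple summing forms: it uses that every $m$-linear form is multiple $(2m/(m+1);1)$-summing (Bohnenblust--Hille) when $r<2m/(m+1)$, and multiple $(r;2mr/(mr+2m-r))$-summing (via the inclusion results of Ara\'ujo--Pellegrino--Silva) when $2m/(m+1)\le r\le 2$, evaluating the weak-$\ell_q$ norms of $(e_j)$ in $\ell_p^n$. You instead take Hardy--Littlewood/Praciano-Pereira at the single endpoint $p_0=2m$ (critical exponent $2$), transfer it to $\ell_p^n$ by the norm inclusion $\Vert T\Vert_{2m}\le n^{m/p-1/2}\Vert T\Vert_p$, and finish with H\"older; the exponent $m/r+m/p-(m+1)/2$ agrees with $(2mr+2mp-mpr-pr)/2pr$. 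For (b) the paper sets $q=mr/(r-1)$ and invokes the multiple $(q/(q-m);q^{\ast})$-summing property from Dimant--Sevilla-Peris, whereas you interpolate (log-convexity of the $\ell_r$-norms) between the $r=2$ bound and the Dimant--Sevilla-Peris critical exponent $p/(p-m)$; your computation $\theta(m/p-1/2)=(p+mr-rp)/pr$ is correct, and you rightly flag that a one-shot H\"older descent from $p/(p-m)$ overshoots by a factor of $m$ --- the paper makes exactly this remark. The optimality arguments (Kahane--Salem--Zygmund for (a), the diagonal form for (b), plus the trivial form when the exponent vanishes) coincide with the paper's. What your route buys is self-containedness: only the classical critical-exponent theorems plus elementary inclusions and norm interpolation are needed. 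What it gives up is the quality of the constants in part (a): you get $(\sqrt{2})^{m-1}$ throughout, while the multiple-summing route yields the subexponential constants $\eta_{\mathbb{K},m}$ and $(\sigma_{\mathbb{K}})^{(m-1)(mr+r-2m)/r}(\eta_{\mathbb{K},m})^{(2m-rm)/r}$ recorded in the paper's remark; in part (b) the two approaches produce the same constant $(\sqrt{2})^{m-1}$.
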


\begin{remark}
The first item of the above theorem recovers \cite[Corollary 5.20(i)]{compl}
(just make $p=2$) and \cite[Proposition 5.1]{camposar}. 
\end{remark}

\section{The proof}

\subsection{First part: preparatory results}

We begin by recalling a generalization of the Kahane--Salem--Zygmund
inequality which is an extension of a result due to Boas (\cite{boas}) that
will be useful in the proof of the optimality of the exponents:

\begin{gkszi}[see \protect\cite{alb}]
Let $m,n\geq 1$, let $p\in \lbrack 1,\infty ]$, and let 
\begin{equation*}
\alpha (p)=\left\{ 
\begin{array}{ll}
\textstyle\frac{1}{2}-\frac{1}{p} & \text{ if }p\geq 2\vspace{0.2cm} \\ 
0 & \text{ otherwise.}%
\end{array}%
\right.
\end{equation*}%
There is an universal constant $C_{m}$ (depending only on $m$) and there
exists an $m$-linear form $A:\ell _{p}^{n}\times \dots \ell
_{p}^{n}\rightarrow \mathbb{K}$ of the form 
\begin{equation}
\textstyle A(z^{(1)},\dots ,z^{(m)})=\sum\limits_{i_{1},\dots
,i_{m}=1}^{n}\pm z_{i_{1}}^{(1)}\cdots z_{i_{m}}^{(m)}  \label{ii887}
\end{equation}%
such that 
\begin{equation*}
\Vert A\Vert \leq C_{m}n^{\frac{1}{2}+m\alpha (p)}.
\end{equation*}
\end{gkszi}

Henceforth, for all $p\in \lbrack 1,\infty ]$, we represent its conjugate
number by $p^{\ast }$, i.e., $\frac{1}{p}+\frac{1}{p^{\ast }}=1$. We will
also use the following notation for the best known upper estimates of the
Bohnenblust--Hille inequality: 
\begin{equation*}
\begin{array}{ll}
\eta _{\mathbb{C},m} := \prod\limits_{j=2}^{m}\Gamma \left( 2-\frac{1}{j}%
\right) ^{\frac{j}{2-2j}}, & \vspace{0.2cm} \\ 
\eta _{\mathbb{R},m} := \prod\limits_{j=2}^{m}2^{\frac{1}{2j-2}} & \text{for 
}m\leq 13, \vspace{0.2cm} \\ 
\eta _{\mathbb{R},m} := 2^{\frac{446381}{55440}-\frac{m}{2}
}\prod\limits_{j=14}^{m}\left( \frac{\Gamma \left( \frac{3}{2}-\frac{1}{j}
\right) }{\sqrt{\pi }}\right) ^{\frac{j}{2-2j}} & \text{for }m\geq 14.%
\end{array}%
\end{equation*}

Let $1\leq q\leq r\leq \infty $ and $E$ be a Banach space$.$ We recall that
an $m$-linear form $S:E\times \cdots \times E\rightarrow \mathbb{K}$ is
called multiple $\left( r;q\right) $--summing if there is a constant $C>0$
such that 
\begin{equation*}
\textstyle\left\Vert ( S(x_{j_{1}}^{(1)},...,x_{j_{m}}^{(m)}))
_{j_{1},...,j_{m}=1}^{n}\right\Vert _{\ell _{r}}\leq C\sup\limits_{\varphi
\in B_{E^{\ast }}}\left( \sum\limits_{j=1}^{n}\vert \varphi
(x_{j}^{(1)})\vert ^{q}\right) ^{\frac{1}{q}}\cdots \sup\limits_{\varphi \in
B_{E^{\ast }}}\left( \sum\limits_{j=1}^{n}\vert \varphi (x_{j}^{(m)})\vert
^{q}\right) ^{\frac{1}{q}}
\end{equation*}%
for all positive integers $n$.


\subsection{Second part: the proof}

(a) Let us consider first $\left( r,p\right) \in \lbrack 1,2]\times \lbrack
2,2m).$ From now on $T:\ell _{p}^{n}\times \cdots \times \ell
_{p}^{n}\rightarrow \mathbb{K}$ is an $m$-linear form. Since 
\begin{equation*}
\textstyle\sup\limits_{\varphi \in B_{\left( \ell _{p}^{n}\right) ^{\ast
}}}\sum\limits_{j=1}^{n}\left\vert \varphi (e_{j})\right\vert =nn^{-\frac{1}{%
p^{\ast }}}=n^{\frac{1}{p}}
\end{equation*}%
and since $T$ is multiple $\left( 2m/(m+1);1\right) $-summing (recall that
from the Bohnenblust--Hille inequality we know that all continuous $m$%
-linear forms are multiple $( 2m/(m+1);1) $-summing with constant $\eta _{%
\mathbb{K},m}$), we conclude that 
\begin{equation}
\textstyle\left( \sum\limits_{j_{1},...,j_{m}=1}^{n}\left\vert
T(e_{j_{1}},...,e_{j_{m}})\right\vert ^{\frac{2m}{m+1}}\right) ^{\frac{m+1}{%
2m}}\leq \eta _{\mathbb{K},m}\left\Vert T\right\Vert n^{\frac{m}{p}}.
\label{333}
\end{equation}

Therefore, if $1\leq r<2m/(m+1)$, using the H\"{o}lder inequality and (\ref%
{333}), we have 
\begin{equation*}
\begin{array}{l}
\textstyle\left( \sum\limits_{j_{1},...,j_{m}=1}^{n}\left\vert
T(e_{j_{1}},...,e_{j_{m}})\right\vert ^{r}\right) ^{\frac{1}{r}}\vspace{0.2cm%
} \\ 
\textstyle\leq \left( \sum\limits_{j_{1},...,j_{m}=1}^{n}\left\vert
T(e_{j_{1}},...,e_{j_{m}})\right\vert ^{\frac{2m}{m+1}}\right) ^{\frac{m+1}{%
2m}}\left( \sum\limits_{j_{1},...,j_{m}=1}^{n}\left\vert 1\right\vert ^{%
\frac{2mr}{2m-rm-r}}\right) ^{\frac{2m-rm-r}{2mr}}\vspace{0.2cm} \\ 
\textstyle=\left( \sum\limits_{j_{1},...,j_{m}=1}^{n}\left\vert
T(e_{j_{1}},...,e_{j_{m}})\right\vert ^{\frac{2m}{m+1}}\right) ^{\frac{m+1}{%
2m}}\left( n^{m}\right) ^{\frac{2m-rm-r}{2mr}}\vspace{0.2cm} \\ 
\textstyle\leq \eta _{\mathbb{K},m}\left\Vert T\right\Vert n^{\frac{m}{p}}n^{%
\frac{2m-rm-r}{2r}}\vspace{0.2cm} \\ 
\textstyle=\eta _{\mathbb{K},m}n^{\frac{2mr+2mp-mpr-pr}{2pr}}\left\Vert
T\right\Vert .%
\end{array}%
\end{equation*}

Now we consider the case $2m/(m+1)\leq r\leq 2$. From the proof of \cite[%
Theorem 3.2(i)]{apspacea} and from \cite[Theorem 1.1]{ap} we know that, for
all $2m/(m+1)\leq r\leq 2$ and all Banach spaces $E$, every continuous $m$%
-linear operator $S:E\times \cdots \times E\rightarrow \mathbb{K}$ is
multiple $\left( r;2mr/(mr+2m-r)\right) $-summing with constant $(\sigma _{%
\mathbb{K}})^{\frac{(m-1)(mr+r-2m)}{r}}(\eta _{\mathbb{K},m})^{\frac{2m-rm}{r%
}}$ where $\sigma _{\mathbb{R}}=\sqrt{2}$ and $\sigma _{\mathbb{K}}=2/\sqrt{%
\pi }$. Therefore 
\begin{equation}
\begin{array}{l}
\textstyle\left( \sum\limits_{j_{1},...,j_{m}=1}^{n}\left\vert
T(e_{j_{1}},...,e_{j_{m}})\right\vert ^{r}\right) ^{\frac{1}{r}}\vspace{0.2cm%
} \\ 
\textstyle\leq (\sigma _{\mathbb{K}})^{\frac{(m-1)(mr+r-2m)}{r}}(\eta _{%
\mathbb{K},m})^{\frac{2m-rm}{r}}\left\Vert T\right\Vert \left[ \left(
\sup\limits_{\varphi \in B_{\left( \ell _{p}^{n}\right) ^{\ast
}}}\sum\limits_{j=1}^{n}\left\vert \varphi (e_{j})\right\vert ^{\frac{2mr}{%
mr+2m-r}}\right) ^{\frac{mr+2m-r}{2mr}}\right] ^{m}.%
\end{array}
\label{67}
\end{equation}%
Since $1\leq 2mr/(mr+2m-r)\leq 2m/(2m-1)=\left( 2m\right) ^{\ast }<p^{\ast },
$ we have 
\begin{eqnarray}
\textstyle\left( \sup\limits_{\varphi \in B_{\left( \ell _{p}^{n}\right)
^{\ast }}}\sum\limits_{j=1}^{n}\left\vert \varphi \left( e_{j}\right)
\right\vert ^{\frac{2mr}{mr-r+2m}}\right) ^{\frac{mr-r+2m}{2mr}} &=&%
\textstyle(n(n^{-\frac{1}{p^{\ast }}})^{\frac{2mr}{mr-r+2m}})^{\frac{mr-r+2m%
}{2mr}}  \notag \\
&=&\textstyle n^{\frac{2mr+2mp-mpr-pr}{2mpr}}  \label{76}
\end{eqnarray}%
%
%
%
%
%
%
%
%
%
%
%
%
and finally, from (\ref{67}) and (\ref{76}), we obtain 
\begin{equation*}
\textstyle\left( \sum\limits_{j_{1},...,j_{m}=1}^{n}\left\vert
T(e_{j_{1}},...,e_{j_{m}})\right\vert ^{r}\right) ^{\frac{1}{r}}\leq (\sigma
_{\mathbb{K}})^{\frac{(m-1)(mr+r-2m)}{r}}(\eta _{\mathbb{K},m})^{\frac{2m-rm%
}{r}}n^{\frac{2mr+2mp-mpr-pr}{2pr}}\left\Vert T\right\Vert .
\end{equation*}

Now we prove the optimality of the exponents. Suppose that the theorem is
valid for an exponent $s$, i.e., 
\begin{equation*}
\textstyle\left( \sum\limits_{j_{1},...,j_{m}=1}^{n}\left\vert
T(e_{j_{1}},...,e_{j_{m}})\right\vert ^{r}\right) ^{\frac{1}{r}}\leq
D_{m,r,p}^{\mathbb{K}}n^{s\allowbreak }\left\Vert T\right\Vert .
\end{equation*}%
Since $p\geq 2,$ from the Generalized Kahane--Salem--Zygmund inequality
(using the $m$-linear form (\ref{ii887})) we have 
\begin{equation*}
\textstyle n^{\frac{m}{r}}\leq C_{m}D_{m,r,p}^{\mathbb{K}}n^{s}n^{\frac{m+1}{%
2}-\frac{m}{p}}
\end{equation*}%
and thus, making $n\rightarrow \infty $, we obtain%
\begin{equation*}
\textstyle s\geq \frac{2mr+2mp-mpr-pr}{2pr}.
\end{equation*}

The case $\left( r,p\right) \in \lbrack 1,2mp/(mp+p-2m)]\times \lbrack
2m,\infty ]$ is analogous. In fact, from the
Hardy--Littlewood/Praciano-Pereira inequality and \cite[Theorem 1.1]{ap} we
know that%
\begin{equation}
\textstyle\left( \sum\limits_{j_{1},...,j_{m}=1}^{n}\left\vert
T(e_{j_{1}},...,e_{j_{m}})\right\vert ^{\frac{2mp}{mp+p-2m}}\right) ^{\frac{%
mp+p-2m}{2mp}}\leq (\sigma _{\mathbb{K}})^{\frac{2m\left( m-1\right) }{p}%
}(\eta _{\mathbb{K},m})^{\frac{p-2m}{p}}\left\Vert T\right\Vert .
\label{3333}
\end{equation}%
Therefore, from H\"{o}lder's inequality and (\ref{3333}), we have 
\begin{equation}
\begin{array}{l}
\textstyle\left( \sum\limits_{j_{1},...,j_{m}=1}^{n}\left\vert
T(e_{j_{1}},...,e_{j_{m}})\right\vert ^{r}\right) ^{\frac{1}{r}}\vspace{0.2cm%
} \\ 
\textstyle\leq \left( \sum\limits_{j_{1},...,j_{m}=1}^{n}\left\vert
T(e_{j_{1}},...,e_{j_{m}})\right\vert ^{\frac{2mp}{mp+p-2m}}\right) ^{\frac{%
mp+p-2m}{2mp}}\left( \sum\limits_{j_{1},...,j_{m}=1}^{n}\left\vert
1\right\vert ^{\frac{2mpr}{2mp+2mr-mpr-pr}}\right) ^{\frac{2mp+2mr-mpr-pr}{%
2mpr}}\vspace{0.2cm} \\ 
\textstyle=\left( \sum\limits_{j_{1},...,j_{m}=1}^{n}\left\vert
T(e_{j_{1}},...,e_{j_{m}})\right\vert ^{\frac{2mp}{mp+p-2m}}\right) ^{\frac{%
mp+p-2m}{2mp}}\left( n^{m}\right) ^{\frac{2mp+2mr-mpr-pr}{2mpr}}\vspace{0.2cm%
} \\ 
\textstyle\leq (\sigma _{\mathbb{K}})^{\frac{2m\left( m-1\right) }{p}}(\eta
_{\mathbb{K},m})^{\frac{p-2m}{p}}n^{\frac{2mp+2mr-mpr-pr}{2pr}}\vspace{0.2cm}%
\left\Vert T\right\Vert .%
\end{array}
\label{um8}
\end{equation}%
Since $p\geq 2m$, the optimality of the exponent is obtained \textit{ipsis
litteris} as in the previous case.

If $\left( r,p\right) \in (2mp/(mp+p-2m),\infty )\times \lbrack 2m,\infty ]$
we have 
\begin{equation*}
\textstyle\frac{2mr+2mp-mpr-pr}{2pr}<0
\end{equation*}%
and 
\begin{eqnarray*}
\textstyle\left( \sum\limits_{j_{1},...,j_{m}=1}^{n}\left\vert
T(e_{j_{1}},...,e_{j_{m}})\right\vert ^{r}\right) ^{\frac{1}{r}} &\leq &%
\textstyle\left( \sum\limits_{j_{1},...,j_{m}=1}^{n}\left\vert
T(e_{j_{1}},...,e_{j_{m}})\right\vert ^{\frac{2mp}{mp+p-2m}}\right) ^{\frac{%
mp+p-2m}{2mp}} \\
&\leq &\textstyle D_{m,\frac{2mp}{mp+p-2m},p}^{\mathbb{K}}\left\Vert
T\right\Vert \\
&=&\textstyle D_{m,\frac{2mp}{mp+p-2m},p}^{\mathbb{K}}\left\Vert
T\right\Vert n^{\max \left\{ \frac{2mr+2mp-mpr-pr}{2pr},0\right\} }.
\end{eqnarray*}%
In this case the optimality of the exponent $\max \left\{
(2mr+2mp-mpr-pr)/2pr,0\right\} $ is immediate, since one can easily verify
that no negative exponent of $n$ is possible.

(b) Let us first consider $\left( r,p\right) \in \lbrack 2,p/(p-m)]\times
(m,2m].$ Define 
\begin{equation*}
\textstyle q=\frac{mr}{r-1}
\end{equation*}%
and note that $q\leq 2m$ and $r=q/(q-m)$. Since $q/(q-m)=r\leq p/(p-m)$ we
have $p\leq q.$ Then $m<p\leq q\leq 2m.$ Note that 
\begin{equation*}
\textstyle q^{\ast }=\frac{mr}{mr+1-r}.
\end{equation*}%
Since $m<q\leq 2m$, by the Hardy-Littlewood/Dimant-Sevilla-Peris inequality
and using \cite[Section 5]{dimant} we know that every continuous $m$-linear
operator on any Banach space $E$ is multiple $\left( q/(q-m);q^{\ast
}\right) $--summing with constant $\textstyle\left( \sqrt{2}\right) ^{m-1}$,
i.e., multiple $\left( r;mr/(mr+1-r)\right) $--summing with constant $%
\textstyle\left( \sqrt{2}\right) ^{m-1}$. So for $T:\ell _{p}^{n}\times
\cdots \times \ell _{p}^{n}\rightarrow \mathbb{K}$ we have (since $q^{\ast
}\leq p^{\ast })$, 
\begin{equation*}
\begin{array}{l}
\textstyle\left( \sum\limits_{j_{1},...,j_{m}=1}^{n}\left\vert T\left(
e_{j_{1}},...,e_{j_{m}}\right) \right\vert ^{r}\right) ^{\frac{1}{r}}\vspace{%
0.2cm} \\ 
\leq \textstyle\left( \sqrt{2}\right) ^{m-1}\left\Vert T\right\Vert \left[
\left( \sup\limits_{\varphi \in B_{\left( \ell _{p}^{n}\right) ^{\ast
}}}\sum\limits_{j=1}^{n}\left\vert \varphi \left( e_{j}\right) \right\vert ^{%
\frac{mr}{mr+1-r}}\right) ^{\frac{mr+1-r}{mr}}\right] ^{m}\vspace{0.2cm} \\ 
=\textstyle\left( \sqrt{2}\right) ^{m-1}\left\Vert T\right\Vert \left[
(n(n^{-\frac{1}{p^{\ast }}})^{\frac{mr}{mr+1-r}})^{\frac{mr+1-r}{mr}}\right]
^{m}\vspace{0.2cm} \\ 
=\textstyle\left( \sqrt{2}\right) ^{m-1}\left\Vert T\right\Vert n^{\frac{%
p+mr-rp}{pr}}.%
\end{array}%
\end{equation*}

Note that if we have tried to use above an argument similar to (\ref{um8}),
via H\"{o}lder's inequality, we would obtain worse exponents. Now we prove
the optimality following the lines of \cite{dimant}. Defining $R:\ell
_{p}^{n}\times \cdots \times \ell _{p}^{n}\rightarrow \mathbb{K}$ by $%
R(x^{(1)},...,x^{(m)})=\sum_{j=1}^{n}x_{j}^{(1)}\cdots x_{j}^{(1)}$, from H%
\"{o}lder's inequality we can easily verify that%
\begin{equation*}
\textstyle\left\Vert R\right\Vert \leq n^{1-\frac{m}{p}}.
\end{equation*}%
So if the theorem holds for $n^{s},$ plugging the $m$-linear form $R$ into
the inequality we have 
\begin{equation*}
\textstyle n^{\frac{1}{r}}\leq D_{m,r,p}^{\mathbb{K}}n^{s}n^{1-\frac{m}{p}}
\end{equation*}%
and thus, by making $n\rightarrow \infty $, we obtain 
\begin{equation*}
\textstyle s\geq \frac{p+mr-rp}{pr}.
\end{equation*}

If $\left( r,p\right) \in (p/(p-m),\infty )\times (m,2m]$ we have 
\begin{equation*}
\textstyle\frac{p+mr-rp}{pr}<0
\end{equation*}%
and 
\begin{eqnarray*}
\textstyle\left( \sum\limits_{j_{1},...,j_{m}=1}^{n}\left\vert
T(e_{j_{1}},...,e_{j_{m}})\right\vert ^{r}\right) ^{\frac{1}{r}} &\leq &%
\textstyle\left( \sum\limits_{j_{1},...,j_{m}=1}^{n}\left\vert
T(e_{j_{1}},...,e_{j_{m}})\right\vert ^{\frac{p}{p-m}}\right) ^{\frac{p-m}{p}%
} \\
&\leq &\textstyle\left( \sqrt{2}\right) ^{m-1}\left\Vert T\right\Vert \\
&=&\textstyle\left( \sqrt{2}\right) ^{m-1}\left\Vert T\right\Vert n^{\max
\left\{ \frac{p+mr-rp}{pr},0\right\} }
\end{eqnarray*}%
In this case the optimality of the exponent $\max \left\{
(p+mr-rp)/pr,0\right\} $ is immediate, since one can easily verify that no
negative exponent of $n$ is possible.

\begin{remark}
Observing the proof of Theorem \ref{888} we conclude that the optimal
constant $D_{m,r,p}^{\mathbb{K}}$ satisfies:

\begin{equation*}
D_{m,r,p}^{\mathbb{K}}\leq \left\{ 
\begin{array}{ll}
\textstyle\eta _{\mathbb{K},m} & \text{if }\textstyle\left( r,p\right) \in %
\left[ 1,\frac{2m}{m+1}\right] \times \lbrack 2,2m),\vspace{0.2cm} \\ 
(\sigma _{\mathbb{K}})^{\frac{(m-1)(mr+r-2m)}{r}}(\eta _{\mathbb{K},m})^{%
\frac{2m-rm}{r}} & \text{if }\left( r,p\right) \in \left( \frac{2m}{m+1},2%
\right] \times \lbrack 2,2m),\vspace{0.2cm} \\ 
(\sigma _{\mathbb{K}})^{\frac{2m\left( m-1\right) }{p}}(\eta _{\mathbb{K}%
,m})^{\frac{p-2m}{p}} & \text{if }\left( r,p\right) \in \lbrack 1,\infty
)\times \lbrack 2m,\infty ],\vspace{0.2cm} \\ 
(\sqrt{2})^{m-1} & \text{if }\left( r,p\right) \in \lbrack 2,\infty )\times
(m,2m].%
\end{array}%
\right. 
\end{equation*}%
If $p>2m^{3}-4m^{2}+2m,$ using \cite{ap2} we can improve $(\sigma _{\mathbb{%
K}})^{\frac{2m\left( m-1\right) }{p}}(\eta _{\mathbb{K},m})^{\frac{p-2m}{p}}$
to $\eta _{\mathbb{K},m}$.
\end{remark}

\section{Final comments and results}

In this section we obtain partial answers for the cases not covered by our
main theorem, i.e., the cases $\left( r,p\right) \in \lbrack 1,2]\times
\lbrack 1,2)$ and $\left( r,p\right) \in ( 2,\infty )\times \lbrack 1,m]$.

\begin{proposition}
\label{prop1} Let $m\geq 2$ be a positive integer.

\begin{itemize}
\item[(a)] If $\left( r,p\right) \in \lbrack 1,2]\times \lbrack 1,2)$, then
there is a constant $D_{m,r,p}^{\mathbb{K}}>0$ such that 
\begin{equation}
\textstyle\left( \sum\limits_{j_{1},...,j_{m}=1}^{n}\left\vert
T(e_{j_{1}},...,e_{j_{m}})\right\vert ^{r}\right) ^{\frac{1}{r}}\leq
D_{m,r,p}^{\mathbb{K}}n^{\frac{2mr+2mp-mpr-pr}{2pr}}\left\Vert T\right\Vert
\label{iu}
\end{equation}%
for all $m$--linear forms $T:\ell _{p}^{n}\times \cdots \times \ell
_{p}^{n}\rightarrow \mathbb{K}$ and all positive integers $n$. Moreover the
optimal exponent of $n$ is not smaller than $(2m-r)/2r$.

\item[(b)] If $\left( r,p\right) \in (2,\infty )\times \lbrack 1,m]$, then
there is a constant $D_{m,r,p}^{\mathbb{K}}>0$ such that 
\begin{equation*}
\textstyle\left( \sum\limits_{j_{1,.},...,j_{m}=1}^{n}\left\vert
T(e_{j_{1}},...,e_{j_{m}})\right\vert ^{r}\right) ^{\frac{1}{r}}\leq \left\{ 
\begin{array}{ll}
D_{m,r,p}^{\mathbb{K}}n^{\frac{2m-p+\varepsilon }{pr}}\left\Vert T\right\Vert
& \text{if }p>2\vspace{0.2cm} \\ 
D_{m,r,p}^{\mathbb{K}}n^{\frac{2m-p}{pr}}\left\Vert T\right\Vert & \text{if }%
p=2%
\end{array}%
\right.
\end{equation*}%
for all $m$--linear forms $T:\ell _{p}^{n}\times \cdots \times \ell
_{p}^{n}\rightarrow \mathbb{K}$ and all positive integers $n$ and all $%
\varepsilon >0.$ Moreover the optimal exponent of $n$ is not smaller than $%
(2mr+2mp-mpr-pr)/2pr$ if $2\leq p\leq m$. In the case $1\leq p\leq 2$ the
optimal exponent of $n$ is not smaller than $(2m-r)/2r$.
\end{itemize}
\end{proposition}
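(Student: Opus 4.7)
The plan is to reduce both items to Theorem~\ref{888} by combining two ingredients: an isometric-type embedding $\ell_p^n \hookrightarrow \ell_q^n$ with $q \geq p$, which inflates the multilinear norm by $n^{m(1/p - 1/q)}$, and an elementary H\"older-type log-convexity estimate for the $\ell^r$-norms that exploits the trivial bound $|T(e_{j_1}, \ldots, e_{j_m})| \leq \|T\|$ (valid since $\|e_j\|_p = 1$).

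For part~(a), given $T \colon \ell_p^n \times \cdots \times \ell_p^n \to \mathbb{K}$ with $p \in [1, 2)$, I would view the same multilinear map as $\widetilde T$ on $\ell_2^n$. Since the identity $\ell_2^n \to \ell_p^n$ has operator norm $n^{1/p - 1/2}$ when $p \leq 2$, one has $\|\widetilde T\|_{\ell_2} \leq n^{m(1/p - 1/2)} \|T\|_{\ell_p}$, while $\widetilde T$ agrees with $T$ on the canonical basis. Applying Theorem~\ref{888}(a) at $p = 2$, whose exponent simplifies to $(2m - r)/(2r)$, and absorbing the $n^{m/p - m/2}$ overhead, arithmetic yields the claimed exponent $(2mr + 2mp - mpr - pr)/(2pr)$. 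For the lower bound, since $\alpha(p) = 0$ when $p \leq 2$, the generalized Kahane--Salem--Zygmund inequality supplies an $m$-linear form $A$ with $|A(e_{j_1}, \ldots, e_{j_m})| = 1$ and $\|A\| \leq C_m n^{1/2}$; plugging $A$ into the hypothetical inequality with exponent $s$ forces $s \geq m/r - 1/2 = (2m - r)/(2r)$.

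For part~(b) I would first handle $p = 2$: Theorem~\ref{888}(a) at $(r,p) = (2,2)$ gives $(\sum |T(e_{j_1}, \ldots, e_{j_m})|^2)^{1/2} \leq D n^{(m-1)/2} \|T\|$, and the elementary estimate $(\sum |a_i|^r)^{1/r} \leq (\sup_i |a_i|)^{(r-2)/r} (\sum |a_i|^2)^{1/r}$ (valid for $r \geq 2$) combined with $|T(e_{j_1}, \ldots, e_{j_m})| \leq \|T\|$ yields exponent $\frac{2}{r} \cdot \frac{m-1}{2} = \frac{m-1}{r}$, matching the stated formula. For $p \in (2, m]$, the same interpolation applied to Theorem~\ref{888}(a) at $(r,p) = (2, p)$ (which requires $p \in [2, 2m)$, granted here) produces exponent $(2m - p)/(pr)$; the $+\varepsilon$ slack in the statement seemingly reflects the use of a subcritical intermediate exponent $r' = 2 - \delta$ with $\delta \to 0^+$, absorbing any constant blow-up into $D_{m,r,p}^{\mathbb{K}}$. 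The range $p \in [1, 2)$ is then handled by first passing through $\ell_2^n$ as in part~(a) and applying the $p = 2$ bound. For optimality, the generalized Kahane--Salem--Zygmund inequality again supplies the lower bounds: for $p \in [2, m]$, $\alpha(p) = 1/2 - 1/p$ gives $\|A\| \leq C_m n^{(m+1)/2 - m/p}$, forcing $s \geq m/r + m/p - (m+1)/2 = (2mr + 2mp - mpr - pr)/(2pr)$; for $p \in [1, 2]$, $\alpha(p) = 0$ produces the weaker bound $s \geq (2m - r)/(2r)$.

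The main obstacle I anticipate is clarifying the $+\varepsilon$ slack in the $p > 2$ upper bound: a clean H\"older interpolation between the $r=2$ estimate of Theorem~\ref{888}(a) and the sup bound seems to deliver the sharper exponent $(2m - p)/(pr)$ with no loss, so I would need to carefully track the constants in the multiple-summing estimates underpinning Theorem~\ref{888}(a) as the intermediate exponent approaches the critical value to decide whether the $\varepsilon$ is intrinsic to the method or a technical artifact that can be avoided.
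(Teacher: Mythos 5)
Your argument is correct, and in both items it reaches the stated conclusions by a route genuinely different from the paper's. For item (a) the paper simply reruns the proof of Theorem~\ref{888}(a) on $\ell_p^n$ itself, observing that the computation $\sup_{\varphi\in B_{(\ell_p^n)^{\ast}}}\bigl(\sum_j|\varphi(e_j)|^{q}\bigr)^{1/q}=n^{1/q-1/p^{\ast}}$ remains valid because the relevant $q=2mr/(mr+2m-r)\le 2<p^{\ast}$ when $p<2$; your factorization through $\ell_2^n$ via $\|\mathrm{id}\colon\ell_2^n\to\ell_p^n\|\le n^{1/p-1/2}$ gives exactly the same exponent, since $(2m-r)/(2r)+m/p-m/2=(2mr+2mp-mpr-pr)/(2pr)$, and is arguably cleaner. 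For item (b) the divergence is more substantial: the paper interpolates in the scale of multiple summing operators (via \cite[Proposition 4.3]{compl}, between multiple $(\infty;p^{\ast}-\delta)$- and $(2;2m/(2m-1))$-summability), and the $\varepsilon$ in the exponent for $p>2$ is inherited from that interpolation; your direct interpolation of the coefficient array, between the $\ell^2$ estimate of Theorem~\ref{888}(a) at $r=2$ (exponent $(2m-p)/(2p)$) and the trivial bound $|T(e_{j_1},\dots,e_{j_m})|\le\|T\|$, yields $n^{(2m-p)/(pr)}$ with no $\varepsilon$ at all, for every $p\in[2,m]$. This settles the question you raise at the end: the $\varepsilon$ is an artifact of the paper's method, not intrinsic, so your argument proves a slightly stronger statement than the one printed (and it also supplies an explicit bound in the subcase $p\in[1,2)$, which the paper's displayed inequality omits). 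The lower bounds via the generalized Kahane--Salem--Zygmund inequality coincide with the paper's.
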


\begin{proof}
(a) The proof of (\ref{iu}) is the same of the proof of Theorem \ref{888}%
(a). The estimate for the bound of the optimal exponent also uses the
Generalized Kahane--Salem--Zygmund inequality. Since $p\leq 2$ we have 
\begin{equation*}
\textstyle n^{\frac{m}{r}}\leq C_{m}D_{m,r,p}^{\mathbb{K}}n^{s}n^{\frac{1}{2}%
}
\end{equation*}%
and thus, by making $n\rightarrow \infty $, 
\begin{equation*}
\textstyle s\geq \frac{2m-r}{2r}.
\end{equation*}

(b) Let $\delta =0$ if $p=2$ and $\delta >0$ if $p>2$. First note that every
continuous $m$-linear form on $\ell _{p}$ spaces is obviously multiple $%
\left( \infty ;p^{\ast }-\delta \right) $--summing and also multiple $\left(
2;2m/(2m-1)\right) $--summing (this is a consequence of the
Hardy--Littlewood inequality and \cite[Section 5]{dimant}). Using \cite[%
Proposition 4.3]{compl} we conclude that every continuous $m$-linear form on 
$\ell _{p}$ spaces is multiple $\left( r;mpr/(2m+mpr-mr-p+\varepsilon
)\right) $--summing for all $\varepsilon >0$ (and $\varepsilon =0$ if $p=2$%
). Therefore, there exist $D_{m,r,p}^{\mathbb{K}}>0$ such that 
\begin{equation*}
\begin{array}{l}
\textstyle\left( \sum\limits_{j_{1},...,j_{m}=1}^{n}\left\vert
T(e_{j_{1}},...,e_{j_{m}})\right\vert ^{r}\right) ^{\frac{1}{r}}\vspace{0.2cm%
} \\ 
\textstyle\leq D_{m,r,p}^{\mathbb{K}}\left[ (n(n^{-\frac{1}{p^{\ast }}})^{%
\frac{mpr}{2m+mpr-mr-p+\varepsilon }})^{\frac{2m+mpr-mr-p+\varepsilon }{mpr}}%
\right] ^{m}\left\Vert T\right\Vert \vspace{0.2cm} \\ 
\textstyle=D_{m,r,p}^{\mathbb{K}}n^{\frac{2m+mpr-mr-p+\varepsilon }{pr}}(n^{%
\frac{1}{p}-1})^{m}\left\Vert T\right\Vert \vspace{0.2cm} \\ 
\textstyle=D_{m,r,p}^{\mathbb{K}}n^{\frac{2m-p+\varepsilon }{pr}}\left\Vert
T\right\Vert .%
\end{array}%
\end{equation*}%
The bounds for the optimal exponents are obtained via the Generalized
Kahane--Salem--Zygmund inequality as in the previous cases.
\end{proof}

\begin{remark}
\bigskip Item (b) of the Proposition \ref{prop1} with $p=2$ recovers \cite[%
Corollary 5.20(ii)]{compl}.
\end{remark}

We believe that the remaining cases (those in which we do not have achieved
the optimality of the exponents) are interesting for further investigation
trying to have a full panorama, covering all cases with optimal estimates.

\end{document}